\newtheorem{theorem}{Theorem}
\newtheorem{lemma}{Lemma}
\title{Schur-like numbers and a lemma of Shearer}
\author{Tomasz Kościuszko}
\begin{document}
\begin{abstract}
Suppose that each number $1,2,\cdots, N$ has one of $n$ colours assigned. We show that if there are no monochromatic solutions to the equation $x_1+x_2+x_3=y_1+y_2$, then $N=O(\sqrt{n!})$, improving upon a result of Cwalina and Schoen. Further, a stronger bound of $N=O(\sqrt{(n-k)!})$, where $k\gg\frac{\log n}{\log\log n}$ is shown for colourings avoiding solutions to the equation $x_1+x_2+\cdots+x_{12}=y_1+y_2+\cdots+y_{9}$. Finally, some remarks on other equations are presented.
\end{abstract}
\maketitle
\section{Introduction}
Schur's Theorem~\cite{schur} is one of the oldest theorems in the area, which today we call additive combinatorics. Despite its remarkably simple statement, one cannot overestimate the influence it had in the past century. \begin{theorem}\label{thm:schur}(Schur)
Let $N$ be a positive integer and suppose that each of the numbers $1,2,\cdots, N$ has one of $n$ colours assigned to it. If that there are no monochromatic solutions to the equation $x+y=z$, then $N\leq \lfloor n!\cdot e\rfloor.$    
\end{theorem}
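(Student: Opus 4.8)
The plan is to reduce the statement to the classical upper bound for the multicolour Ramsey number of the triangle. Given an $n$-colouring $\chi$ of $\{1,\dots,N\}$ with no monochromatic solution of $x+y=z$, I would form the complete graph on the vertex set $\{0,1,\dots,N\}$ and colour the edge $\{i,j\}$ with $i<j$ by $\chi(j-i)$. If $0\le i<j<k\le N$ spanned a monochromatic triangle, then $j-i$, $k-j$ and $k-i$ would all receive the same $\chi$-colour, and since $(j-i)+(k-j)=k-i$ this would be a forbidden monochromatic solution. Hence the resulting $n$-edge-colouring of $K_{N+1}$ has no monochromatic triangle, so $N+1\le R_n-1$, where $R_n$ denotes the least $m$ such that every $n$-colouring of $E(K_m)$ contains a monochromatic triangle.

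Next I would prove the standard recursion $R_n\le n(R_{n-1}-1)+2$. Fix an $n$-edge-colouring of $K_m$ with $m=n(R_{n-1}-1)+2$ and any vertex $v$; since $\deg v=n(R_{n-1}-1)+1$, the pigeonhole principle produces a colour and a set $A$ of at least $R_{n-1}$ neighbours of $v$ all joined to $v$ by an edge of that colour. If any edge inside $A$ has that colour, it closes a monochromatic triangle with $v$; otherwise $K_A$ is coloured using only the remaining $n-1$ colours and, having at least $R_{n-1}$ vertices, contains a monochromatic triangle by induction. With the base case $R_1=3$, writing $b_n:=R_n-1$ turns this into $b_n\le nb_{n-1}+1$ with $b_1=2$, and unwinding the recursion gives $b_n\le\sum_{k=0}^{n}\frac{n!}{k!}$.

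Finally, the elementary estimate $0<n!\,e-\sum_{k=0}^{n}\frac{n!}{k!}=\sum_{k>n}\frac{n!}{k!}<\frac1n\le 1$ shows that the integer $\sum_{k=0}^{n}\frac{n!}{k!}$ equals $\lfloor n!\,e\rfloor$. Therefore $R_n\le\lfloor n!\,e\rfloor+1$, and feeding this into the reduction above yields $N\le R_n-2\le\lfloor n!\,e\rfloor-1$, which is in fact slightly stronger than the asserted bound.

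The argument has no real obstacle: the only genuinely clever move is the translation of a colouring of an interval of integers into an edge-colouring of a complete graph, after which the Ramsey recursion does all the work. The remaining care is purely bookkeeping — tracking the $+1$ and $+2$ offsets and the base case precisely enough to land on $\lfloor n!\,e\rfloor$ rather than a weaker constant, and verifying the closed form $\lfloor n!\,e\rfloor=\sum_{k=0}^{n}n!/k!$.
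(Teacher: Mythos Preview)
Your proof is correct and follows essentially the same approach as the paper: both construct the complete graph whose edges are coloured by the difference $\chi(|u-v|)$, observe that a monochromatic triangle yields a monochromatic Schur triple, and then run the pigeonhole/neighbourhood recursion that underlies the bound $R_n\le n(R_{n-1}-1)+2$. The only cosmetic differences are that you factor explicitly through the multicolour Ramsey number and take vertex set $\{0,1,\dots,N\}$ rather than $[N]$ (which is why you land on $\lfloor n!\,e\rfloor-1$ instead of $\lfloor n!\,e\rfloor$), while the paper performs the same induction directly on nested subsets $X_0\supseteq X_1\supseteq\cdots\supseteq X_n$ of $[N]$.
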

Proved in 1917, Schur's Theorem precedes two other major results in combinatorics - Van der Waerden's Theorem and Ramsey's Theorem.
Van der Waerden's Theorem may seem similar because the question here is avoiding monochromatic arithmetic progressions. The first non-trivial case concerns arithmetic progressions of length~3, which correspond to monochromatic solutions of the equation $x+y=2z$. This equation is invariant, which means that the sum of the coefficients is equal to~0. Much has been discovered about such equations (see \cite{km}, \cite{schoen}, \cite{szem} for recent developments). Roth's Theorem and Szemeredi's Theorem tell us that presence of solutions can be deduced by looking solely at the densities of colours involved. If a set, or a colour, is large enough, then it is bound to contain the desired solution.

The situation is much different in the case of Schur's equation, $x+y=z$. It is not invariant, which means that shifting a set could introduce potential solutions, whereas the density remains the same. There also exist sets of large density without Schur triples, for example, the set of odd numbers. That makes the problem all the more intriguing, as we are forced to understand how different colours could interact in order to cover as many consecutive integers as possible.

We denote the least number $N$ for which a colouring satisfying the requirement of Theorem~\ref{thm:schur} is not possible as $S(n)$ and call it Schur's number.
Ramsey's Theorem about avoiding monochromatic triangles in a colouring of a complete graph turns out to be closely related to the problem of determining $S(n)$.
In fact, any upper bound in Ramsey's Theorem is an upper bound in Schur's Theorem and any lower bound for Schur's number is a lower bound for the Ramsey number $r_n(3)$.

The link between the two theorems is a graph in which edges correspond to the set $A-A$, where $A$ is a set containing numbers of the same colour.
This connection will be explained in detail when we recall the graph theoretic proof of Schur's Theorem in Section~2. The same connection will be exploited in the proofs of Theorems~\ref{thm:schur_like} and~\ref{thm:long}. \\ 
Concerning the lower bound, Schur proved that $S(n) > (3^n+1)/2$.
It is worth noting that the greedy strategy also gives an exponential lower bound and works for all equations which are not invariant. There, for suitably chosen $\alpha>0$ we use colour $i$ to cover the interval $[\alpha^{i-1}, \alpha^i)\cap\mathbb{N}$. For example $\alpha=2$ works for the equation $x+y=z$ and $\alpha=1.5$ for the equation $x_1+x_2+x_3=y_1+y_2$.\\
The bounds for Schur's number have been improved slightly (see \cite{abbot}, \cite{chung}, \cite{exoo}, \cite{fredricksen}, \cite{wan}) and currently stand at
$$(3.19)^n\ll S(n)\ll \lfloor(e-\frac{1}{24})n!\rfloor.$$
Cwalina and Schoen, in their paper~\cite{cs}, proposed an extension to Schur's problem. They introduced schur-like numbers, $S_k(n)$ is the smallest number $N$ for which any $n$-colouring of $[N]:=\{1,2,\cdots, N\}$ must contain a monochromatic solution to the equation 
$$x_1+x_2+\cdots x_{k+1}= y_1+y_2+\cdots y_k.$$
Clearly $S_i(n)\leq S_j(n)$ for $i>j$ and $S_1(n)=S(n).$ They proved the following result about $S_2(n)$. 
\begin{theorem}\label{thm:cs}(Cwalina-Schoen)
Let $N$ be a positive integer and suppose that each of the numbers $1,2,\cdots, N$ has one of $n$ colours assigned to it. Suppose that there are no monochromatic solutions to the equation
$$x_1+x_2+x_3=y_1+y_2.$$
Then we have $N=O(n^{-c\log n/\log\log n}n!)$,
for a small absolute constant $c$.
\end{theorem}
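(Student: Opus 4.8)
The plan is to translate the statement into a Ramsey-type problem on a single graph, read off from the equation exactly which monochromatic configurations it forbids, and then run the classical ``pivot into a colour class'' recursion carefully enough to save a little at each step. I would work with the complete graph $G$ on $\{0,1,\dots,N\}$ in which the edge $\{i,j\}$ receives the colour of $|i-j|$. If some colour class $A$ contained $a_1,\dots,a_5\in A$ (repetitions allowed) with $a_1+a_2+a_3=a_4+a_5$, that would be a forbidden solution, so every colour class, viewed as a set of integers, satisfies
\[
3A\cap 2A=\varnothing,\qquad\text{equivalently}\qquad A\cap(2A-2A)=\varnothing ,
\]
where $kA$ denotes the $k$-fold sumset. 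This strengthens the usual sum-free-type condition $A\cap(A-A)=\varnothing$, which by itself already forbids monochromatic triangles in $G$: a triangle $i<j<k$ gives $(j-i)+(k-j)=(k-i)$ inside one class, and padding both sides with any further element of that class produces a solution. The extra content is that $G$ contains no monochromatic $5$-cycle whose vertices, traversed cyclically, make two ascents and three descents (or three and two) — the ascent-jumps and descent-jumps then all lie in one colour class and satisfy precisely $y_1+y_2+y_3=x_1+x_2$, and a coincidence of this kind is exactly a point of $3A\cap 2A$. (By contrast, $5$-cycles with a $1$/$4$ split and all $4$-cycles stay legal, which is why one does not simply get $N=O(n)$ from an Erd\H{o}s--Gallai bound on monochromatic paths.)

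For the baseline $N=O(n!)$ I would run the standard recursion. Fix the smallest vertex $v_0$, keep the colour $c$ occurring most often on the edges at $v_0$, and pass to its neighbourhood $B$: then $|B|\ge (N-1)/n$, the induced colouring on $B$ omits $c$ (a monochromatic $B$-edge together with $v_0$ would be a monochromatic triangle), and $B$, being a translate of a subset of the single class $A_c$, still has $3A'\cap 2A'=\varnothing$ for each of its colour classes $A'$. Iterating drives the number of colours down to one, where the current set has size at most $2$ (three integers with all three pairwise differences the same colour would contain a Schur triple), and unrolling reproduces $N=O(n!)$.

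To beat $n!$ I would try to show that banning balanced $5$-cycles forces, at any stage with $M$ points and $m$ active colours, the heaviest colour-neighbourhood over all choices of pivot to exceed the average $(M-1)/m$ by a factor $1+\varepsilon_m$ with $\varepsilon_m=\Theta\!\bigl(\tfrac{\log m}{m\log\log m}\bigr)$ — equivalently, that the $m$ colour graphs cannot all be simultaneously near-regular triangle-free difference graphs once balanced $5$-cycles are also excluded. When the colour graphs are far from regular one saves outright (a lopsided colour yields a neighbourhood that beats the average); the rigid near-regular case is exactly where a finer count, and I expect a Shearer-type entropy/independence input, is needed. Carrying the factor $1+\varepsilon_m$ through the $n-1$ steps multiplies to
\[
\prod_{m=2}^{n}\Bigl(1+\Theta\bigl(\tfrac{\log m}{m\log\log m}\bigr)\Bigr)=\exp\!\Bigl(\Theta\bigl(\tfrac{(\log n)^{2}}{\log\log n}\bigr)\Bigr)=n^{\,\Theta(\log n/\log\log n)},
\]
which yields $N=O\bigl(n^{-c\log n/\log\log n}\,n!\bigr)$.

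The main obstacle is precisely this quantitative step: turning the soft combinatorial fact ``no balanced monochromatic $5$-cycle'' into an honest per-step gain of the right order, and then checking that the induction on the number of colours actually closes. The gap between $\log(n!)$ and the target is itself of size $(\log n)^2/\log\log n$, so this lower-order term must be tracked exactly rather than swallowed by $O(\cdot)$; in particular the naive induction loses exactly the amount one is trying to gain. Accordingly I would set everything up as an induction on the number of colours — any finite set of integers whose same-colour difference classes satisfy $3A\cap 2A=\varnothing$ and which uses $m$ colours has size $O\bigl(m^{-c\log m/\log\log m}\,m!\bigr)$ — with the inductive step being the dichotomy above and the base cases amounting to the statement that $K_M$ has no triangle-free difference-colouring by too few integers.
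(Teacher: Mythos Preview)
The paper does not prove Theorem~\ref{thm:cs} itself --- it is quoted as a result of Cwalina and Schoen --- but it proves the strictly stronger bound $N=O(\sqrt{n!})$ (Theorem~\ref{thm:schur_like}), so the relevant comparison is with that argument.

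Your proposal is not a proof: the step you yourself flag as ``the main obstacle'' --- that at every stage with $m$ active colours the heaviest colour-neighbourhood beats the average by a factor $1+\Theta\bigl(\tfrac{\log m}{m\log\log m}\bigr)$ --- is stated as a target, not established. You correctly isolate the combinatorial content of the hypothesis (no monochromatic triangles, no monochromatic $5$-cycles with a $3/2$ sign split), but you give no mechanism for turning the $5$-cycle ban into that particular per-step gain. The passing reference to ``a Shearer-type entropy/independence input'' is precisely where the work lies, and nothing in the outline explains why such an input should yield the order $\tfrac{\log m}{m\log\log m}$ rather than something smaller or larger. As written, the argument proves only the $O(n!)$ baseline.

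The paper's route is genuinely different and bypasses your obstacle entirely. Rather than pivoting into the neighbourhood of a vertex (which costs a factor $\sim m$ per step and then tries to shave it), it passes to a large \emph{independent set} in the triangle-free graph $G_i$ of the densest remaining colour. The equation yields $(2A_i-2A_i)\cap A_i=\varnothing$ as you observed; the paper uses this not to forbid $5$-cycles but to show that the closed $2$-ball around any vertex of $G_i$ is covered by at most five independent sets, forcing $1+d_1(v)+d_2(v)\le 5\alpha(G_i)$. Plugging this uniform denominator bound into Shearer's inequality $\alpha(G)\ge\sum_v d_1(v)/(1+d_1(v)+d_2(v))$ gives $5\alpha(G_i)^2\ge\sum_v d_1(v)\ge |X_i|(|X_i|-1)/m$, hence $|X_{i+1}|=\alpha(G_i)\gtrsim |X_i|/\sqrt{m}$, and the recursion closes to $N=O(\sqrt{n!})$. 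The saving per step is a full factor $\sqrt{m}$, not $1+o(1)$, and it comes from one clean lemma rather than a near-regularity dichotomy.
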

Their second impressive result was to generalise Schur's Theorem to all non-invariant, regular equations. They use deep results about Bohr sets and a complicated iterative procedure in order to have induction on the colours, as in Schur's proof. Prior to them, it was however highly non-obvious that an inductive proof can be given for a general equation. Their result is given below. In the statement, a regular equation is an equation which contains an invariant equation. It is a result of Rado\cite{rado} that only such equations are worth considering, for non-regular ones there exists a colouring of $\mathbb{N}$ with number of colours depending on the equation, which avoids solutions to it.
\begin{theorem}\label{thm:general}(Cwalina-Schoen)
Let $a_1 x_1 + a_2 x_2 +\cdots a_k x_k = 0$ be a regular non-invariant equation with integer coefficients. Let $N$ be a positive integer and suppose that each of the numbers $1,2,\cdots, N$ has one of $n$ colours assigned to it. Suppose that there are no monochromatic solutions to this equation.
Then we have
$$N\ll 2^{O(n^4\log^4 n)}.$$
Furthermore, if the equation contains an invariant equation with at least four variables, then $N\ll 2^{O(n^3\log^5 n)}.$ Finally, if the equation contains an invariant equation of genus at least $2$, then $N\ll 2^{O(n^2\log^5 n)}.$
\end{theorem}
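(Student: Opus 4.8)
The plan is to run an induction on the number of colours $n$, following the shape of Schur's original argument, but with intervals replaced by Bohr sets and with the colour-reduction step driven by a Fourier-analytic density increment in place of Ramsey's theorem. Since the equation $a_1x_1+\cdots+a_kx_k=0$ is regular, by definition it contains an invariant subequation $\sum_j b_j y_j=0$ with $\sum_j b_j=0$; I would fix this subequation once and for all and rely on two soft properties of invariant equations. First, they are preserved by every affine substitution $x\mapsto\lambda x+\mu$, so a solution of the subequation survives the passage between a Bohr set and the interval $[N]$. Second, inside a Bohr set $B$ any subset of density bounded below contains $\gg|B|^{t-1}$ solutions of the subequation, where $t$ is its number of variables --- a supersaturation statement I would prove by expanding the relevant counting operator over the Fourier characters adapted to $B$. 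The non-invariance of the full equation enters through a shifting trick: a solution of the subequation lying inside a single colour class can, after translating by an element forced to carry that same colour, be completed to a genuine solution of $\sum a_ix_i=0$. This mechanism is exactly the base case $n=1$: a single colour occupying a host set that is large enough in absolute terms must contain a monochromatic solution.

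For the inductive step I would take the most popular colour class $A$ on the current host Bohr set $B$ of dimension $d$ and width $\rho$, so that $A$ has density $\geq1/n$ on $B$. The crux is a density-increment versus colour-elimination dichotomy: running the supersaturation count for the invariant subequation against $A$, either $A$ itself contains a subequation solution which the shifting trick promotes to a monochromatic solution of the full equation --- contradicting the hypothesis --- or the count deviates from its expected value, and the standard Fourier argument then exposes a character at which $\widehat{1_A}$ is large, hence a sub-Bohr set $B'$ of dimension $d+\mathrm{poly}(n)$ and width smaller by a factor $\mathrm{poly}(1/n)$ on which the density of $A$ has grown by a fixed factor $1+c$. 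A density cannot exceed $1$, so after $O(\log n)$ such refinements one is pushed into the contradictory case unless $A$ has meanwhile vanished from the host set altogether; in that event the host Bohr set carries only $n-1$ colours and the inductive hypothesis applies to it.

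It then remains to bookkeep. Each of the at most $n$ colour-elimination rounds is a chain of $O(\log n)$ Fourier steps, each costing $\mathrm{poly}(n)$ in dimension and a factor $\mathrm{poly}(1/n)$ in width; hence the host Bohr set we arrive at, once only one colour can persist, has dimension $D=\mathrm{poly}(n,\log n)$ and width $w\geq 2^{-\mathrm{poly}(n,\log n)}$. Since a Bohr set of dimension $D$ and width $w$ inside $[N]$ has $\gtrsim w^DN$ elements, and since by the base case such a set forces a monochromatic solution as soon as its size exceeds a constant depending only on the equation, the whole process must already have produced a contradiction provided $N\gtrsim w^{-D}=2^{\mathrm{poly}(n,\log n)}$; tracking the exponents through the recursion yields $N\ll2^{O(n^4\log^4 n)}$. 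The two refinements are gains in the per-step dimension cost: if the invariant subequation has $t\geq4$ variables, the exponent $t-1\geq3$ in the supersaturation estimate gives extra room in the Fourier step and shaves one factor of $n$, producing $2^{O(n^3\log^5 n)}$; if moreover its genus is at least $2$, one can split off two disjoint invariant sub-configurations and count them independently, shaving a further factor of $n$ and producing $2^{O(n^2\log^5 n)}$.

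The step I expect to be the main obstacle is the density-increment lemma on Bohr sets with a dimension increase only polynomial in $n$: the new Bohr-set frequencies must be chosen so as to absorb all the large Fourier coefficients of $1_A$ at once --- their number bounded by an $L^2$ estimate --- without inflating the dimension, and this has to be done uniformly as the host Bohr set changes from round to round, all while keeping the width from collapsing faster than claimed. A secondary nuisance is that non-invariance is invoked twice --- once to convert subequation solutions into solutions of the full equation and once in the base case --- and the affine shifts this needs must be verified to lie inside the current host Bohr set, which bounds below how small its width may be permitted to become at each stage.
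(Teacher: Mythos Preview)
The paper does not contain a proof of this statement. Theorem~\ref{thm:general} is quoted as a background result of Cwalina and Schoen, with a citation to~\cite{cs}; the present paper neither proves nor sketches it. The only comment the paper makes about the argument is the sentence ``They use deep results about Bohr sets and a complicated iterative procedure in order to have induction on the colours, as in Schur's proof.'' There is therefore nothing in this paper to compare your proposal against.

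That said, your sketch is broadly consonant with that one-line description: you are running an induction on the number of colours, with Bohr sets as the host structures and a Fourier density-increment replacing the Ramsey step. Whether the quantitative bookkeeping you outline actually reproduces the exponents $n^4\log^4 n$, $n^3\log^5 n$, $n^2\log^5 n$ cannot be checked from anything in this paper; for that you would have to consult~\cite{cs} directly. In particular, your explanation of why four variables or genus~$\geq 2$ each save a factor of $n$ is heuristic and not something the present paper substantiates.
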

The first result of this paper improves upon the bound on $S_2(n)$ mentioned in Theorem~\ref{thm:cs}.
\begin{theorem}\label{thm:schur_like}
Let $N$ be a positive integer and suppose that each of the numbers $1,2,\cdots, N$ has one of $n$ colours assigned to it. Suppose that there are no monochromatic solutions to the equation
$$x_1+x_2+x_3=y_1+y_2.$$
Then we have $N=O(\sqrt{n!})$.    
\end{theorem}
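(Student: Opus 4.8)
The plan is to mimic the graph-theoretic proof of Schur's Theorem and push it through an induction on the number of colours, but with a cleverer accounting that saves a square root. Recall how the Ramsey-type argument works: if $A_i$ is the set of numbers receiving colour $i$, then avoiding a monochromatic $x+y=z$ means that $(A_i-A_i)\cap A_i=\emptyset$; colouring the edge $\{a,b\}$ of the complete graph on $[N]$ by the colour of $|a-b|$ (if that colour class is involved) gives a triangle-free-per-colour structure, forcing $N\leq \lfloor e\cdot n!\rfloor$ via the recursion $S(n)\leq n\cdot S(n-1)+1$. For the equation $x_1+x_2+x_3=y_1+y_2$ the obstruction becomes $(A_i-A_i)\cap(A_i+A_i)=\emptyset$, i.e. within a single colour class no difference equals a sum of two elements.

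\medskip

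First I would fix the largest colour class, say $A=A_n$, of size $|A|\geq N/n$, and restrict attention to $A-A$. The key structural input is a \emph{Shearer-type lemma} (as the title promises): if $A$ has no solution to $x_1+x_2+x_3=y_1+y_2$, then the difference set $A-A$, or an appropriate translate/dilate of $A$ living inside it, is itself highly structured — specifically it again avoids the same equation, \emph{and} one gains a quantitative multiplicative saving on the interval length that colour $n$ can be responsible for. Concretely I expect a recursion of the shape
\[
  S_2(n)\ \le\ f(n)\cdot S_2(n-1)+O(1),
\]
where, crucially, $\prod_{m\le n} f(m)=O(\sqrt{n!})$ rather than $O(n!)$; e.g. $f(n)=O(\sqrt{n})$ would do, or more plausibly a bound like $f(n)=O(n)$ valid only on alternate steps, or $f(n)f(n-1)=O(n)$. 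The square root in the final bound strongly suggests that \emph{two} colours are eliminated while the interval shrinks only by a factor $O(n)$, i.e. a recursion $S_2(n)\le c\,n\cdot S_2(n-2)+O(1)$; unwinding this over $n/2$ steps gives $\prod c\cdot n\cdot (n-2)\cdots = O(c^{n/2}\sqrt{n!})$, and absorbing the $c^{n/2}$ is where one must be careful (it would need $c=1+o(1)$, or a more refined count).

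\medskip

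The heart of the matter, and the step I expect to be the main obstacle, is proving that sharper recursion: one must show that a colour class $A$ avoiding $x_1+x_2+x_3=y_1+y_2$ cannot simultaneously be large \emph{and} have $A-A$ fill up most of a long interval. The naive Schur argument only uses $A\cap(A-A)=\emptyset$, losing a full factor of $n$; to gain back a square root I would combine this with the extra relation $(A+A)\cap(A-A)=\emptyset$ and a Plünnecke–Ruzsa or Shearer entropy-type inequality relating $|A-A|$, $|A+A|$, and $|A|$ on an interval of length $N$. The point is that if $|A|$ is close to $N/n$ then $|A\pm A|$ is forced to be large, but it must dodge both $A$ and its own translates, leaving room for only $\approx N/(c\sqrt{n})$ further integers per eliminated colour on average. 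Turning this density statement into the clean integer recursion, and in particular controlling the additive constant and the base case, is the delicate part; once the recursion $S_2(n)\le c\,n\,S_2(n-2)+O(1)$ with $c$ sufficiently close to $1$ is in hand, the bound $N=O(\sqrt{n!})$ follows by a routine induction.
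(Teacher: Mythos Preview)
Your proposal has the right overall architecture (iteratively pass to a subset free of one more colour) but misidentifies both the ``Shearer lemma'' and the mechanism that produces the square-root saving, and the concrete recursion you aim for is not the one that actually works.

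The relevant lemma of Shearer is \emph{not} the entropy/Pl\"unnecke--Ruzsa circle of ideas you invoke; it is his lower bound for the independence number of a triangle-free graph,
\[
\alpha(G)\ \ge\ \sum_{v} \frac{d_1(v)}{1+d_1(v)+d_2(v)},
\]
where $d_i(v)$ is the number of vertices at distance $i$ from $v$. One applies this to the graph $G_i$ on the current set $X_i$ whose edges are the pairs with difference in the most popular remaining colour $A_i$; this graph is triangle-free because a triangle gives a Schur triple, hence a solution to $x_1+x_2+x_3=y_1+y_2$. The crucial additive input is not $(A-A)\cap(A+A)=\emptyset$ but rather $(2A_i-2A_i)\cap A_i=\emptyset$: this forces each of the three sets $a+A_i+A_i$, $a+A_i-A_i$, $a-A_i-A_i$ (intersected with $X_i$) to be independent in $G_i$, and together with the two sets $a\pm A_i$ they cover $\{v_a\}\cup N_1(v_a)\cup N_2(v_a)$. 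Hence $1+d_1(v)+d_2(v)\le 5\,\alpha(G_i)$ for every $v$, and Shearer's inequality becomes
\[
5\,\alpha(G_i)^2\ \ge\ \sum_v d_1(v)\ \ge\ \frac{|X_i|(|X_i|-1)}{n+1-i},
\]
so $\alpha(G_i)\ge |X_i|/(3\sqrt{n+1-i})$. Taking $X_{i+1}$ to be this independent set gives the one-step recursion $|X_{i+1}|\ge |X_i|/(c\sqrt{n+1-i})$: a \emph{single} colour is removed at the cost of only a $\sqrt{n}$ factor, not two colours for a factor $n$ as you conjectured. Unwinding over $n$ steps yields $N\ll c^{n}\sqrt{n!}$; the exponential prefactor is harmless against $\sqrt{n!}$.

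In short, the gap in your plan is the missing pair of ideas ``Shearer's independence bound for triangle-free graphs'' $+$ ``the $2$-ball around any vertex is covered by five independent sets because $A_i\cap(2A_i-2A_i)=\emptyset$''. The sumset/entropy route you sketch does not obviously produce the needed $|X_{i+1}|\ge |X_i|/O(\sqrt{n-i})$, and the two-colours-at-once recursion $S_2(n)\le cn\,S_2(n-2)$ is not what is proved.
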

Our most important tool is a lemma by Shearer, which allows us to find a large independent set in a graph free of triangles. The lemma will be stated in the next section. We also use the final result from the paper of Shearer~\cite{shearer} to prove a stronger bound, however for a much longer equation.
\begin{theorem}\label{thm:long}
Let $N$ be a positive integer and suppose that each of the numbers $1,2,\cdots, N$ has one of $n$ colours assigned to it. Suppose that there are no monochromatic solutions to the equation
$$x_1+x_2+\cdots+ x_{12}=y_1+y_2+\cdots+y_{9}.$$
Then we have $N=O(\sqrt{(n-k)!})$, where $k\geq c\frac{\log n}{\log\log n}$ for some small constant $c$.   
\end{theorem}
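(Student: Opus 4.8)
The plan is to run the argument behind Theorem~\ref{thm:schur_like} --- reduce to difference graphs and feed them to Shearer's lemma --- but to replace the basic triangle-free estimate by the final, sharper result of~\cite{shearer}, which the longer equation makes available. Concretely, I would again assume an $n$-colouring of $[N]$ with no monochromatic solution, pass to the complete graph on $\{0,1,\dots,N\}$ with the edge $\{i,j\}$ coloured by the colour of $|i-j|$, and for each colour $c$ consider the difference graph $D_c$ on its class $A_c$. The first task is to record what the longer equation forbids. Counting multiplicities, a monochromatic solution to $x_1+\dots+x_{12}=y_1+\dots+y_9$ inside $A_c$ exists precisely when there are distinct $a_1,\dots,a_r\in A_c$ and integers $e_1,\dots,e_r$ with $\sum_i e_i=3$, $\sum_{e_i<0}(-e_i)\le 9$ and $\sum_i e_i a_i=0$. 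For $x+y=z$ or $x_1+x_2+x_3=y_1+y_2$ this only rules out Schur triples (equivalently, it makes every $A_c$ sum-free and every $D_c$ triangle-free); but with twelve and nine terms the condition additionally forbids, for example, any two monochromatic elements whose ratio lies in a certain explicit finite set of rationals, the smallest being $4/3$, together with a large family of longer configurations. The values $12$ and $9$ are chosen to be as small as possible while making this forbidden family rich enough: it forces each $D_c$ to be not merely triangle-free but sparse in the precise local sense under which the final theorem of~\cite{shearer} improves on $\alpha(G)\gtrsim n\log d/d$.

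With that in hand I would re-run the colour-elimination recursion of Theorem~\ref{thm:schur_like}, in which deleting one colour costs a factor $\sqrt{n}$ and yields $N=O(\sqrt{n!})$. As long as the number $m$ of surviving colours is not already small, applying the stronger estimate of~\cite{shearer} to $D_c$ for the densest remaining colour produces a set --- all of whose differences avoid colour $c$ --- that is larger than the triangle-free bound would give by a super-constant factor, so that a whole block of colours can be eliminated for the price of a single ordinary step. Balancing the accumulated gain (which is roughly polynomial in $n$) against the cost $k!$ of the colours in such a block shows that one can absorb $k$ colours with $k\ge c\log n/\log\log n$ --- this is exactly the place where $k!$ catches up with the bonus, hence the shape of the bound --- after which the recursion runs as before on the remaining $n-k$ colours, giving $N=O(\sqrt{(n-k)!})$.

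I expect two points to carry the weight. The genuinely hard one is certifying that the graphs $D_c$ that occur along the recursion really do satisfy the sparsity hypothesis of the final theorem of~\cite{shearer}, uniformly and without loss in the constants --- that is, converting ``twelve and nine forbid many patterns'' into a quantitative statement about the relevant local counts in $D_c$ --- and then pushing the savings honestly to $k\asymp\log n/\log\log n$. The routine but unavoidable point, already present in Schur's proof and in Theorem~\ref{thm:schur_like}, is that the objects produced along the recursion are arbitrary finite subsets of $\mathbb{Z}$ rather than intervals, so the statement carried through the induction must be the version for general sets. Finally, the same scheme applied to other pairs of lengths in place of $(12,9)$ should give analogous bounds with other values of $k$, and it would be worth remarking which pairs are optimal.
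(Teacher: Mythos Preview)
Your high-level plan --- ``the longer equation unlocks the stronger Shearer estimate, so buy an extra $k\asymp\log n/\log\log n$ colours up front and then run the $\sqrt{n!}$ recursion on the rest'' --- matches the paper. But two of the load-bearing pieces are missing, and the way you propose to organise the argument would not work as written.

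First, the structural consequence of $(12,9)$ is not the ratio/``local sparsity'' picture you sketch: what one actually needs, and what the paper proves (Lemma~\ref{lem:girth} together with Lemma~\ref{lem:extension}), is that each difference graph has odd girth at least~$7$. A $5$-cycle in the difference graph yields a signed relation $\sum_{i=1}^5 r_i a_i=0$ with $r_i=\pm1$ and an odd number of $+1$'s, hence a solution to $x_1+x_2+x_3+x_4=y_1$ or $x_1+x_2+x_3=y_1+y_2$; both embed into the $(12,9)$ equation. This is exactly the hypothesis of Lemma~\ref{lem:shearer2} with $m=3$. Second, and more seriously, you plan to apply the stronger estimate \emph{along the recursion}, but the paper explicitly notes this fails: Lemma~\ref{lem:shearer2} needs many vertices of large $d_1(v)$, which is available only on the initial interval (indeed the paper pads $[N]$ to $[2N]$ with three dummy colours precisely to force this), and is destroyed the moment you pass to an arbitrary independent subset. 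The paper's substitute is to perform the first $k$ steps \emph{in parallel} rather than recursively: use Lemma~\ref{lem:large_colour} to locate $k\gg\log n/\log\log n$ colours each of density $\ge n^{-1.3}$, apply Lemma~\ref{lem:shearer2} once on the original (near-regular) graph for each of them to get independent sets $S_1,\dots,S_k$ of size $\gg N/n^{0.45}$, and then intersect random translates $\bigcap_i(t_i+S_i)$ to produce a single set of size $\gg N/n^{0.46k}$ whose difference set avoids all $k$ colours at once. Only then does the ordinary $\sqrt{(n-k)!}$ recursion from Theorem~\ref{thm:schur_like} take over. Your proposal contains neither the large-colour lemma nor the shift-intersection step, and without them the savings you describe cannot be accumulated.
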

It would seem that the same bound should hold for some $S_k(n)$ for sufficiently large $k$, however we were not able to prove it.\\
At the end of the paper we address two equations different to the ones from Theorem~\ref{thm:schur_like} and Theorem~\ref{thm:long}. One of them is $$x_1+x_2+\cdots+x_l = y,$$ seemingly similar to Schur's equation, however it is not clear whether the same bounds should hold. To our knowledge, the only existing bound is $$N=O(\exp(n^4\log^4 n))$$ coming from Theorem~\ref{thm:general} from the paper of Cwalina and Schoen. By a repeated application of Schur's Theorem we show a bound closer to $S(n)$. 
\begin{theorem}\label{thm:imbalanced}
Let $l\geq 3$ be an integer. Let $N$ be a positive integer and suppose that each of the numbers $1,2,\cdots, N$ has one of $n$ colours assigned to it. Suppose that there are no monochromatic solutions to the equation $x_1+x_2+\cdots + x_l=y$. Then we have $N\leq e(n\lceil\log_2 l\rceil)!$.
\end{theorem}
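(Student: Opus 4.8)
The plan is to derive Theorem~\ref{thm:imbalanced} from Schur's Theorem by refining the given colouring. Write $t=\lceil\log_2 l\rceil$, so that $2^{t-1}<l\le 2^t$. Fix an $n$-colouring of $[N]$ with no monochromatic solution to $x_1+\cdots+x_l=y$, and let $A$ be any one of its colour classes. It suffices to show that every such $A$ can be partitioned into $t$ sum-free sets (sets with no solution to $x+y=z$): doing this inside each of the $n$ colour classes yields an $(nt)$-colouring of $[N]$ with no monochromatic solution to $x+y=z$, and Theorem~\ref{thm:schur} then gives $N\le\lfloor (nt)!\,e\rfloor\le e(n\lceil\log_2 l\rceil)!$.

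To partition $A$, I would attach to each $a\in A$ a rank measuring how deeply $a$ can be \emph{unfolded} inside $A$. For an integer $d\ge 0$, say $a$ is \emph{$d$-deep} if there is a labelling of the complete binary tree of depth $d$ by elements of $A$ with $a$ at the root and with every internal label equal to the sum of its two child labels; let $\rho(a)$ be the largest such $d$. Since expanding a node into its two children leaves the total of the current frontier unchanged, a $d$-deep element equals the sum of its $2^d$ leaf labels, all of which lie in $A$, and more generally the sum of the labels at any fixed level. The first step is to check that $\rho(a)\le t-1$ for every $a\in A$. If some $a$ were $t$-deep, fix such a labelling; its $2^{t-1}$ level-$(t-1)$ labels lie in $A$ and sum to $a$. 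Now expand exactly $k:=l-2^{t-1}$ of these $2^{t-1}$ nodes into their two children and keep the remaining $2^{t-1}-k$ unchanged; since $0<k\le 2^{t-1}$ this is legitimate, and it exhibits $a$ as a sum of $2k+(2^{t-1}-k)=l$ elements of $A$. As $a\in A$, this is a monochromatic solution to the equation, a contradiction. The second step is that $A_d:=\{a\in A:\rho(a)=d\}$ is sum-free for each $0\le d\le t-1$: if $b_1+b_2=b_3$ with $b_1,b_2,b_3\in A_d$, then placing a $d$-deep labelling of $b_1$ and one of $b_2$ as the two subtrees below a new root labelled $b_3\in A$ gives a valid depth-$(d+1)$ labelling of $b_3$, so $\rho(b_3)\ge d+1$, contradicting $\rho(b_3)=d$. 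Hence $A=A_0\cup\cdots\cup A_{t-1}$ is a partition into $t$ sum-free sets, as required.

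I do not expect a serious obstacle; the argument is short once the rank $\rho$ is in place. The only mildly delicate point is the \emph{collapsing} step that converts a sum of $2^t$ elements of $A$ into a sum of exactly $l$ of them — this is precisely why the internal labels of the tree are required to lie in $A$, and it is where the inequalities $2^{t-1}<l\le 2^t$ get used. One should also record the convention, standard for Schur-type equations and already built into Theorem~\ref{thm:schur}, that the variables in a solution need not be distinct, so repeated labels in the tree cause no difficulty; under a distinctness requirement the problem would be of a different nature. Finally, the same decomposition can be presented iteratively — remove from $A$ the (sum-free) set of elements not lying in $A+A$, note that what remains avoids $x_1+\cdots+x_{\lceil l/2\rceil}=y$, and repeat $\lceil\log_2 l\rceil$ times — which is perhaps the more natural reading of ``repeated application of Schur's Theorem'' and leads to the same bound.
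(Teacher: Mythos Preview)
Your argument is correct and is essentially the paper's approach: both refine each colour class into $\lceil\log_2 l\rceil$ sum-free layers via binary-tree decompositions inside $A$ and then invoke Schur's Theorem on the resulting $n\lceil\log_2 l\rceil$-colouring. The only difference is packaging --- the paper runs a dynamic recolouring (repeatedly promoting the top of a monochromatic Schur triple to the next auxiliary level), whereas you define the layers statically via the rank $\rho$; your iterative variant at the end is exactly the paper's procedure read backwards.
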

Finally, we use a similar argument to show a bound for the equation $$x_1+2x_2+x_3=y_1+y_2.$$ One of the coefficients is not equal to $\pm1$, which is a departure from all the previous cases. Again, the best known bound comes from Theorem~\ref{thm:general} and stands at ${N=O(\exp(n^2\log^5 n))}$, as $x_1+x_3=y_1+y_2$ is an equation of genus 2.
\begin{theorem}\label{thm:coefficients}
Let $N$ be a positive integer and suppose that each of the numbers $1,2,\cdots, N$ has one of $n$ colours assigned to it. Suppose that there are no monochromatic solutions to the equation $x_1+2x_2+x_3=y_1+y_2$. Then we have $N\leq e(2n)!$.
\end{theorem}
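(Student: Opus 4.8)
The plan is to imitate the strategy used for Theorem~\ref{thm:imbalanced}: reduce to Schur's Theorem while paying only a factor of two in the number of colours, the extra factor absorbing the coefficient $2$ standing in front of $x_2$.

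I would begin by recording the elementary obstructions. If $\chi\colon[N]\to[n]$ has no monochromatic solution and $A$ is a colour class, then $A$ contains no five distinct numbers of the shape $a,b,m,a+b,2m$ (take $x_1=a,\ x_2=m,\ x_3=b,\ y_1=a+b,\ y_2=2m$), none of the shape $a,b,m,a+m,b+m$ (take $x_1=a,\ x_2=m,\ x_3=b,\ y_1=a+m,\ y_2=b+m$), and similarly for a handful of further patterns; in particular a colour class cannot contain a Schur triple together with a doubling pair $\{m,2m\}$ disjoint from it. Were repeated values among $x_1,x_2,x_3,y_1,y_2$ permitted, the configuration $(X,Y,X,Z,Z)$ with $X+Y=Z$ would already force $\chi$ to have no monochromatic solution to $x+y=z$ and give $N\le\lfloor e\,n!\rfloor$; it is the distinctness requirement that blocks this shortcut and makes the factor $2$ necessary below.

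The core is then a reduction step: from $\chi$ one builds a colouring $\psi$ of $[N]$ with at most $2n$ colours and no monochromatic solution to $x+y=z$, by letting $\psi(m)$ consist of $\chi(m)$ together with a single extra bit recording local ``doubling'' information about $m$ (for instance whether $2m\le N$ and $\chi(2m)=\chi(m)$), so that $\psi$ uses at most $2n$ colours. One must then check that any monochromatic $\psi$-Schur triple $p+q=r$ forces an honest five-variable monochromatic solution for $\chi$: the common $\psi$-colour of $p,q,r$ fixes their $\chi$-colour and, through the extra bit, the $\chi$-colour of enough of $2p,2q,2r$, and inside the resulting configuration one locates one of the forbidden five-element patterns. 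Granting this, Schur's Theorem applied to $\psi$ yields $N\le\lfloor e\,(2n)!\rfloor\le e\,(2n)!$.

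The main obstacle is precisely the design and verification of this reduction step. The extra information must be chosen so that \emph{every} monochromatic $\psi$-Schur triple yields five \emph{distinct} values lying in a single $\chi$-class and inside $[N]$, while costing only one bit and not a whole extra colouring; the delicate part is the degenerate cases---doubled values exceeding $N$, or accidental coincidences such as $2p\in\{q,r\}$ that would collapse the five-element pattern---which is the same bookkeeping that has to be carried out in the proof of Theorem~\ref{thm:imbalanced} when $l=3$.
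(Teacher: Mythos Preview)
Your parenthetical observation already proves the theorem as stated. Nowhere does the statement require the five variables to be distinct, and the paper's own argument does not produce distinct values either; so a monochromatic Schur triple $X+Y=Z$ immediately gives the monochromatic solution $(x_1,x_2,x_3,y_1,y_2)=(X,Y,X,Z,Z)$, whence $N\le\lfloor e\,n!\rfloor$, which is stronger than the bound claimed. The paper forgoes this shortcut only because its point is to exhibit the recolouring device of Theorem~\ref{thm:three} on a second equation: one introduces empty auxiliary classes $A_1',\dots,A_n'$; whenever a triple $a+b=c$ appears in some current $A_i$ one transfers $a$ to $A_i'$; by Schur's theorem for $2n$ colours one eventually finds $a'+b'=c'$ entirely inside some $A_j'$, and the triple that caused $a'$ to move, say $x_1+a'=y_1$ with $x_1,y_1\in A_j$, then yields $x_1+2a'+b'=y_1+c'$ in the original class $A_j$.

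This is a \emph{dynamic} recolouring, not a static refinement $\psi=(\chi,\text{bit})$ of the kind you propose, and the distinction matters if one actually insists on distinct variables. Your sample bit (whether $2m\le N$ and $\chi(2m)=\chi(m)$) fails outright in the bit-$0$ case: a $\psi$-monochromatic triple $p+q=r$ with bit $0$ says only that $p,q,r\in A_i$ while $2p,2q,2r\notin A_i$, so you possess three elements of $A_i$ and no mechanism to produce a fourth or fifth. No one-bit static refinement can store the \emph{witness pair} $(x_1,y_1)$ attached to a moved element; the paper's iterative process carries precisely that information, since each member of $A_j'$ remembers the Schur triple that sent it there. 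Incidentally, the ``bookkeeping'' you allude to in the proof of Theorem~\ref{thm:imbalanced} is not about distinctness either---that argument, like this one, freely allows repeated values.
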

\section{Shearer's lemma and the consequences}
This section will be focused on the proof of Theorem~\ref{thm:schur_like}. It will follow the same basic idea as the classic graph theoretic proof of Schur's Theorem, which we briefly recall here.
\begin{proof}(Of Schur's Theorem~\ref{thm:schur})
Let $A_1,A_2,\cdots, A_n$ be sets containing numbers in each colour.
For each $A_i$ we construct a graph on $N$ vertices, where each vertex is identified with a number from $[N]$. Two vertices $v,w$ are connected if $|v-w|\in A_i$. Our proof will follow $n$ steps, where in each step we will construct a subset $X_i\subseteq [N]$. Initially we set $X_0=[N]$. We will prove inductively, that for every $X_i$, there exists $X_{i+1}\subseteq X_i$, such that $$|X_{i+1}|\geq \frac{|X_i|-1}{n-i}$$ and the complete graph with vertices in $X_{i+1}$ contains at most $n-(i+1)$ colours.\\
Let $v$ be an arbitrary vertex of $X_i$. The edges of the complete graph with vertices in $X_i$ have at most $n-i$ colours, which means that $v$ has at least $\frac{|X_{i}|-1}{(n-i)}$ outgoing edges in the same colour, without loss of generality call it $A_i$. Call the set of vertices connected to $v$ with this colour $X_{i+1}$. Let $s,t\in X_{i+1}$, we claim that $s$ and $t$ are not connected with an edge of colour $A_i$. If that was the case, then we would have a monochromatic triangle with vertices $v,s,t$. If $v_1 < v_2 < v_2$ is the increasing ordering of $v,s,t$, then $v_2-v_1, v_3-v_2, v_3-v_1\in A_i$. Then $(v_2-v_1)+(v_3-v_2)=(v_3-v_1)$ is a monochromatic solution to Schur's equation.\\
After $n$ steps we obtain a set $X_n$, which has size at most $1$ - it corresponds to a complete graph with edges free of all colours. We can show by induction that
$$|X_{n-m}|\leq m!\sum_{i=0}^{m}\frac{1}{i!}.$$
It is clearly true for $m=0$, for $m>0$ we have
$$|X_{n-m}|\leq m(|X_{n-m+1}|-1) \leq m (m-1)!\sum_{i=0}^{m-2}\frac{1}{i!}\leq m!\sum_{i=0}^{m}\frac{1}{i!}.$$
Thus we have proved $N=|X_0|\leq n!\sum_{i=0}^{n}\frac{1}{i!}\leq n!\cdot e$.
\end{proof}
We ideally would like to find a way to have sets $X_i$ larger with each step. In the case of Schur's Theorem there is no clear way how to do it, however if we consider the equation $x_1+x_2+x_3=y_1+y_2$, there is a way to construct a larger independent set.
Let us recall a lemma by Shearer from his work~\cite{shearer}.
\begin{lemma}\label{lem:shearer1}
Let $G$ be a triangle-free graph. Then the independence number $\alpha(G)$ of $G$ satisfies
$$\alpha(G)\geq \sum_{v\in G}d_1(v)/(1+d_1(v)+d_2(v)),$$
where $d_i(v)$ is the number of vertices at distance $i$ from $v$.
\end{lemma}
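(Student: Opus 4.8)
The plan is to build the required independent set probabilistically, in the spirit of the Caro--Wei bound $\alpha(G)\ge\sum_v 1/(d_1(v)+1)$ but squeezing more out of triangle-freeness. Fix a uniformly random linear order $\pi$ on $V(G)$, written $\prec$, and for a vertex $v$ let $B_2(v)=\{v\}\cup N(v)\cup N_2(v)$ be the ball of radius $2$ around $v$, where $N(v)$ is the neighbourhood (so $|N(v)|=d_1(v)$) and $N_2(v)$ is the set of vertices at distance exactly $2$; thus $|B_2(v)|=1+d_1(v)+d_2(v)$. Define
$$S_\pi=\bigl\{\,v\in V(G)\ :\ \text{the $\pi$-least element of }B_2(v)\text{ lies in }N(v)\,\bigr\}.$$
The proof then reduces to two claims: $S_\pi$ is always an independent set, and $\mathbb{E}|S_\pi|$ equals the right-hand side of the lemma. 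Granting these, some order $\pi$ attains $|S_\pi|\ge\mathbb{E}|S_\pi|$, and that $S_\pi$ is an independent set of the required size.

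The expectation is the routine half. For a uniformly random order the $\pi$-least element of a fixed finite set is uniformly distributed over that set, so $\mathbb{P}[v\in S_\pi]=|N(v)|/|B_2(v)|=d_1(v)/(1+d_1(v)+d_2(v))$, and linearity of expectation gives $\mathbb{E}|S_\pi|=\sum_v d_1(v)/(1+d_1(v)+d_2(v))$.

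The substance -- and the step I expect to be the \emph{main obstacle} -- is that $S_\pi$ is independent; this is the only point where triangle-freeness enters. Suppose for contradiction $v,v'\in S_\pi$ are adjacent, and let $u$ be the $\pi$-least element of $B_2(v)$ and $u'$ the $\pi$-least element of $B_2(v')$, so $u\in N(v)$ and $u'\in N(v')$. The fiddly part is to dispose of the degenerate cases $u=v'$ and $u'=v$: if, say, $u=v'$, then $v'\prec v$ since $v\in B_2(v)$, one checks $u'\neq v$, and then $u'\in N(v')\setminus\{v\}$ lies at distance exactly $2$ from $v$ (a common neighbour of $v$ and $v'$ would be a triangle), so $u'\in B_2(v)$; this forces simultaneously $v'\prec u'$ (as $v'$ is $\pi$-least in $B_2(v)$) and $u'\prec v'$ (as $u'$ is $\pi$-least in $B_2(v')$), a contradiction. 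Once $u\neq v'$ and $u'\neq v$, the same triangle-free observation shows $u'$ is at distance exactly $2$ from $v$ and $u$ at distance exactly $2$ from $v'$, so $u'\in B_2(v)$ and $u\in B_2(v')$; minimality then yields $u\preceq u'$ and $u'\preceq u$, i.e. $u=u'$. But then $u$ is a common neighbour of the adjacent pair $v,v'$, a triangle -- contradiction. Hence $S_\pi$ is independent.

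I would keep in reserve an inductive alternative: pick a suitable $v$ and use $\alpha(G)\ge d_1(v)+\alpha(G-B_2(v))$, which is valid because $N(v)$ is independent and has no neighbours outside $B_2(v)$, then induct on $|V(G)|$. The difficulty there is that deleting $B_2(v)$ perturbs the distance-$1$ and distance-$2$ degrees of the surviving vertices in a way that is awkward to balance against the function $t\mapsto t/(1+t+\cdot)$, so the direct probabilistic argument above is the one I would write up.
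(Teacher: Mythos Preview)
Your argument is correct: the random-ordering construction of $S_\pi$ is independent exactly by the triangle-free case analysis you give, and the expectation computation is the trivial half. The paper does not write out its own proof of this lemma but refers the reader to Shearer's original paper for ``a probabilistic proof'', and what you have produced is precisely that argument.
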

We refer the reader to the original paper for a probabilistic proof. This lemma was ultimately used by Lin~\cite{lin} to improve upper bound on the multi-colour Ramsey number for odd cycles. Our proof relies on the fact that cycles of length 5 and solutions to the equation $x_1+x_2+x_3=y_1+y_2$ turn out to behave similarly.
\begin{proof}(Of Theorem~\ref{thm:schur_like})
Let $A_1,A_2,\cdots, A_n$ be sets containing numbers in each colour.
We will construct $n+1$ sets $$X_0\supseteq X_{1}\supseteq\cdots\supseteq X_n,$$ where $X_0=[N]$. Each set $X_i$ will correspond to a complete graph with edges in at most $n-i$ colours. The edges in each of these graphs will correspond to the colours $A_j$ in the following way. Let $u,v\in X_i$, then the edge between $u$ and $v$ will have colour $A_j$ if $|v-u|\in A_j$.\\
Without loss of generality let $A_i$ be the largest colour in $X_i-X_i$, that is the one that creates the most edges between vertices corresponding to $X_i$. Let $G_i$ be the graph with vertices in $X_i$ and edges in the colour $A_i$. 
For a fixed element $a\in X_i$ and corresponding vertex $v_a\in G_i$, we will now construct two large independent sets. One of them is the larger of these two sets.
$$R_1:=\{v_{a+x}: x\in A_i, a+x\in X_i\}$$
$$R_2:=\{v_{a-x}: x\in A_i, a-x\in X_i\}$$
They are both independent as $(A_i-A_i)\cap A_i = \emptyset$. The larger one has at least $d_1(v_a)/2$ elements because $d_1(v_a)= R_1\cup R_2$.\\ The other set will be the largest out of the following three.
$$S_1=\{v_{a+x+y}: x,y\in A_i, a+x+y\in X_i\}$$
$$S_2=\{v_{a+x-y}: x,y\in A_i, a+x-y\in X_i\}$$
$$S_3=\{v_{a-x-y}: x,y\in A_i, a-x-y\in X_i\}$$
We know that $S_1,S_2$ and $S_3$ are all independent as $(2A_i-2A_i)\cap A_i = \emptyset$.\\ Moreover $d_2(v_a)\cup \{v_a\}=S_1\cup S_2\cup S_3$, therefore the largest out of the three sets has size at least $(1+d_2(v_a))/3$.\\ We therefore  have $$5\alpha(G_i)\geq 3\max(|S_1|,|S_2|,|S_3|) + 2\max(|R_1|,|R_2|) \geq1+d_1(v_a)+d_2(v_a).$$ 
Clearly $G_i$ is triangle free as any triangle would correspond to a solution to our equation. Therefore by Lemma~\ref{lem:shearer1} we have
$$\alpha(G_i)\geq \sum_{v\in G_i}d_1(v)/(1+d_1(v)+d_2(v)).$$
Bounding the denominator by $5\alpha(G_i)$ we get
$$5\alpha(G_i)^2\geq \sum_{v\in G_i}d_1(v)\geq |G_i|(|G_i|-1)/(n+1-i)$$ and so $$\alpha(G_i)\geq \frac{|G_i|}{3\sqrt{n+1-i}}.$$ We choose $X_{i+1}$ to be the largest independent set of the graph $G_i$. We have established that
$$|X_{i+1}|\geq \frac{|X_i|}{3\sqrt{n+1-i}}.$$
After $n$ iterations of the above argument we have a set $X_{n}$ for which $|X_n|\gg \frac{N}{\sqrt{n!}}$, but also the corresponding edges do not contain any colours so $|X_n|\leq 1$. We conclude that $N=O(\sqrt{n!})$.
\end{proof}
\section{Improving the first iteration}
Our next result is a slight strengthening on the previous result, given we consider a longer equation. We use a more powerful version of Lemma~\ref{lem:shearer1} for regular graphs. It comes from the paper of Shearer~\cite{shearer} and is an intermediate step in the proof of Theorem~2.
\begin{lemma}\label{lem:shearer2}
Let G be a graph on $N$ vertices of odd girth $2m+3$ or greater ($m\geq 2$). The independence number of this graph $\alpha(G)$ can be bounded as
$$\alpha(G)\geq \sum_{v\in G}\Bigl[\frac{d_1(v)2^{-(m-2)}}{1+d_1(v)+d_2(v)+\cdots +d_m(v)}\Bigr]^{1/(m-1)}.$$
\end{lemma}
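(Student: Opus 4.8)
The plan is to prove the bound by the probabilistic method, constructing a random independent set $I$ with $\mathbb{E}|I|$ at least the right-hand side; since $\alpha(G)\geq\mathbb{E}|I|$, this suffices. First I would record the structural consequence of the odd-girth hypothesis that drives everything: if $\mathrm{dist}(v,u)=i$ and $\mathrm{dist}(v,w)=j$ with $i,j\leq m$ and $u\sim w$, then concatenating shortest $v$--$u$ and $v$--$w$ paths with the edge $uw$ gives a closed walk of length $i+j+1\leq 2m+1$, so if $i+j+1$ were odd it would contain an odd cycle of length at most $2m+1$, contradicting the odd girth. Hence adjacent vertices inside the ball $B_m(v)$ always lie at distances of opposite parity from $v$; in particular the distance layers around $v$ behave like the levels of a tree, and every short closed walk through $B_m(v)$ is even. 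This ``no short odd closed walk'' property is exactly what will force the witnesses below to be distinct.

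To fix ideas and to settle the base case $m=2$ (which already reproduces the statement, since then $2^{-(m-2)}=1$, $1/(m-1)=1$, and the bound is Lemma~\ref{lem:shearer1}), assign to each vertex an independent uniform label $X_v\in[0,1]$ and put
$$I=\bigl\{v:\ \textstyle\arg\min_{u\in B_m(v)}X_u\in N(v)\bigr\},$$
so that $\mathbb{E}|I|=\sum_v d_1(v)/(1+d_1(v)+\cdots+d_m(v))$, because the minimiser over the $|B_m(v)|$ labels is uniformly distributed over that ball. For $m=2$ this set is independent: if $v\sim v'$ were both selected, with witnesses $w\in N(v)$ and $w'\in N(v')$ realising the two minima, then $w\in B_2(v')$ and $w'\in B_2(v)$ force $X_w=X_{w'}$, impossible unless $w=w'$; but $w=w'$ together with $v\sim v'$ is a triangle. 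I would take this argmin-with-witness mechanism as the template and upgrade it for larger $m$.

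For general $m\geq 2$ the plan is to iterate the selection through the $m-1$ layers of $B_m(v)$, following Shearer: rather than a single argmin I would expose the labels layer by layer and retain $v$ only when a prescribed ``descending'' pattern of layerwise minimisers points back toward $N(v)$. Each intermediate layer contributes an independent binary choice of direction, which is the source of the factor $2^{-(m-2)}$, and the probability of a full favourable pattern is a product of $m-1$ layer-probabilities whose geometric mean is bounded below using the convexity of $t\mapsto t^{1/(m-1)}$ (a power-mean / Jensen step); this is what turns the product into the stated $(m-1)$-th root of $d_1(v)2^{-(m-2)}/|B_m(v)|$. Independence of $I$ is again checked by matching the witness chains of two adjacent selected vertices $v,v'$: equality of a pair of witnesses whose distances, together with the edge $vv'$, sum to an odd length would produce an odd closed walk of length at most $2m+1$, excluded by the structural fact above, while continuity of the labels rules out the ties that distinct witnesses would require.

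I expect the main obstacle to be precisely this engineering of the recursion: defining the layerwise selection so that (i) its success probability factorises cleanly enough to apply the power-mean inequality and deliver exactly the constant $2^{-(m-2)}$, and (ii) every potential collision of witnesses that would violate independence corresponds to a short \emph{odd} closed walk rather than merely a short walk, so that the odd-girth hypothesis, and not the full girth, is what is being used. Controlling the second point is where the opposite-parity layering is indispensable, and getting the bookkeeping of the first point to land on the stated exponent is the delicate part; the triangle-free template of the $m=2$ case is the guide, and the remaining work is to carry the witness/parity argument through all $m-1$ levels.
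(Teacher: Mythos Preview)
The paper does not prove this lemma: it is quoted from Shearer~\cite{shearer} as ``an intermediate step in the proof of Theorem~2'' there, and the author simply cites it. There is thus no in-paper argument to compare your proposal against.

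On the proposal itself: your $m=2$ case is correct and is exactly Shearer's proof of Lemma~\ref{lem:shearer1}. For $m>2$, however, what you have written is a plan rather than a proof. The random set $I$ is never actually defined---``expose the labels layer by layer and retain $v$ only when a prescribed descending pattern of layerwise minimisers points back toward $N(v)$'' is a placeholder, not a construction---and you yourself flag the two things that would need to be checked (that the success probability factorises to give at least $\bigl[d_1(v)2^{-(m-2)}/|B_m(v)|\bigr]^{1/(m-1)}$, and that adjacent vertices cannot both succeed) as ``the main obstacle'' and ``the delicate part''. Since those two verifications are the entire content of the lemma beyond the triangle-free case, the proposal has a genuine gap at the inductive step: it correctly isolates the structural input (edges inside $B_m(v)$ run only between consecutive distance layers, so the ball is bipartite) but does not supply the construction that converts this into the stated bound with the stated constant $2^{-(m-2)}$ and exponent $1/(m-1)$. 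In Shearer's paper the exponent and constant emerge from an induction on $m$ together with a power-mean step, not from a single $m$-layer selection rule; if you want to complete your approach you will need either to reproduce that recursion or to exhibit the layerwise rule explicitly and verify both points.
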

The idea in the proof of Theorem~\ref{thm:long} is that we can use Lemma~\ref{lem:shearer2} to construct a large set without a colour, provided the colour is not too small. We can however do so only in the first step of the induction, where our graph is regular. Luckily, using the idea of Cwalina and Schoen, we can perform the first $k$ steps as it were, in parallel. A lemma from their paper will be useful in showing that there are enough large colours.
\begin{lemma}\label{lem:factorial}
    Suppose that $\{1\cdots, N\} = A_1\cup\cdots\cup A_n$ is a partition into sum-free sets such that $|A_1|\geq\cdots\geq |A_n|$. Let $k\in\{1,\cdots, n\}$ and set $\sigma_k = \sum_{i>k} |A_i|$. Then we have
    $$|A_k|\geq \frac{|A_1|}{(k-1)!}-2(\sigma_k+1).$$
\end{lemma}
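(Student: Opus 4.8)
The plan is to rewrite the target inequality as $|A_1|\le (k-1)!\bigl(|A_k|+2\sigma_k+2\bigr)$ and argue by induction on $k$, the case $k=1$ being trivial. The only feature of sum-free sets I would use is the following translation remark: if $A$ is sum-free with $a_0=\min A$, then $(A-a_0)\setminus\{0\}$ consists of exactly $|A|-1$ integers lying in $\{1,\dots,N-1\}$, none of which lies in $A$ — an element $a-a_0\in A$ would give the forbidden relation $a_0+(a-a_0)=a$ — so this set is covered by the colour classes other than $A$. Applied to $A_1$ it gives in particular $|A_2|+\cdots+|A_n|\ge |A_1|-1$. This is the only point at which it matters that the coloured set is an interval.

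For the inductive step I would split according to the size of the second-largest class. If $|A_2|\ge |A_1|/(k-1)$, I would delete $A_1$ and apply the inductive hypothesis, with parameter $k-1$, to the partition $A_2\sqcup\cdots\sqcup A_n$ of $\{1,\dots,N\}\setminus A_1$, in which $A_2$ is now the largest class; this yields $|A_k|\ge |A_2|/(k-2)!-2(\sigma_k+1)\ge |A_1|/(k-1)!-2(\sigma_k+1)$. If instead $|A_2|<|A_1|/(k-1)$, so that every class other than $A_1$ has fewer than $|A_1|/(k-1)$ elements, I would apply the translation remark to $A_1$ directly: of its $|A_1|-1$ translated elements at most $|A_2|+\cdots+|A_{k-1}|\le (k-2)|A_2|$ lie in $A_2\cup\cdots\cup A_{k-1}$, so at least $|A_1|-1-(k-2)|A_2|\ge |A_1|/(k-1)-1$ of them lie in $A_k\cup\cdots\cup A_n$. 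Since $k-1\le (k-1)!$, this gives $|A_k|+\sigma_k\ge |A_1|/(k-1)!-1$, which already implies more than the claimed bound.

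The delicate point, and the step I expect to be the real obstacle, is the recursion in the first case. The set $\{1,\dots,N\}\setminus A_1$ is no longer an initial segment, and for a general ground set the statement is false — for instance $\{1,3,5,7,9,11\}=\{1,3,5,7,9\}\sqcup\{11\}$ already violates the $k=2$ bound — so the lemma cannot be quoted verbatim. One must instead keep the translation remark anchored inside $\{1,\dots,N\}$ and control, at each recursive depth $j$, how many translated elements of the current largest class $A_{j+1}$ "leak" back into the already-deleted classes $A_1,\dots,A_j$ rather than landing in the classes still under consideration. The estimates go through once one knows this leakage is at most roughly $\sigma_k+1$ whenever one is in the recursive branch, i.e. whenever $|A_{i+1}|\ge |A_i|/(k-i)$ for all $i\le j$ (these inequalities force $|A_{j+1}|\ge |A_1|\,(k-1-j)!/(k-1)!$, which is exactly what is needed to convert the depth-$j$ version of the second case into the stated factorial). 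Proving such a leakage bound — presumably by playing the size inequalities defining the recursive branch against the sum-free density estimate $|A|\le\lceil N/2\rceil$ — is where the genuine work lies; the remaining manipulations are routine bookkeeping with the factorial weights.
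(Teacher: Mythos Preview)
First, a point of comparison: the paper does not actually prove this lemma. It is quoted from Cwalina--Schoen~\cite{cs} and used as a black box, so there is no in-paper argument for your proposal to be measured against.

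On the substance of your plan: the translation remark is correct, and your Case~2 branch goes through cleanly---once $|A_2|<|A_1|/(k-1)$, the single translate of $A_1$ already forces enough mass into $A_k\cup\cdots\cup A_n$. Your counterexample $\{1,3,5,7,9,11\}=\{1,3,5,7,9\}\sqcup\{11\}$ is also right and shows precisely why Case~1 cannot quote the lemma for the ground set $[N]\setminus A_1$.

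The problem is that the gap you flag in Case~1 is the whole proof, and your proposed fix does not close it. The leakage at depth~$j$ is
\[
\bigl|\bigl((A_{j+1}-\min A_{j+1})\setminus\{0\}\bigr)\cap(A_1\cup\cdots\cup A_j)\bigr|,
\]
and nothing you have available bounds this. The recursive-branch inequalities $|A_{i+1}|\ge|A_i|/(k-i)$ only give \emph{lower} bounds on the early classes, which is the wrong direction; and the density estimate $|A_1|\le\lceil N/2\rceil$ says nothing about how a translate of $A_{j+1}$ meets $A_1,\dots,A_j$. A translate of a sum-free set has no structural reason to avoid another sum-free set, so the leakage could in principle be as large as $|A_{j+1}|-1$, wiping out the gain entirely. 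Your ``presumably'' is thus carrying the full weight of the argument. As it stands this is an outline with its central step missing, not a proof sketch with routine details deferred; for the actual argument you should consult~\cite{cs} directly.
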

In our application we want to have as many colours as possible above the treshold $\frac{N}{n^{1.3}}$.
\begin{lemma}\label{lem:large_colour}
    Suppose that $\{1\cdots, N\} = A_1\cup\cdots\cup A_n$ is a partition into sum-free sets such that $|A_1|\geq\cdots\geq |A_n|$. There exists $k\gg \frac{\log n}{\log\log n}$ such that
    $$|A_k|\geq \frac{N}{n^{1.3}}.$$
\end{lemma}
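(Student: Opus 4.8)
The plan is to argue by contradiction. Put $k_0:=\lceil\log n/(10\log\log n)\rceil$; since $k_0\geq\tfrac{1}{10}\log n/\log\log n$, it suffices to exhibit this one index, i.e.\ to show $|A_{k_0}|\geq N/n^{1.3}$. We may assume $N>n^{1.3}$, for otherwise $N/n^{1.3}<1\leq|A_{k_0}|$ (all $n$ classes being nonempty and $k_0\leq n$ for $n$ large) and we are done. Suppose then, towards a contradiction, that $|A_{k_0}|<N/n^{1.3}$. Since $|A_1|\geq\cdots\geq|A_n|$, this forces $|A_i|<N/n^{1.3}$ for every $i\geq k_0$, so the tail satisfies $\sigma_{k_0-1}=\sum_{i\geq k_0}|A_i|\leq n\cdot N/n^{1.3}=Nn^{-0.3}$.

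The step I expect to be the crux is the observation that a uniformly small tail forces the \emph{largest} colour well above its trivial lower bound $|A_1|\geq N/n$. Indeed $\sum_{i<k_0}|A_i|=N-\sigma_{k_0-1}\geq N/2$ once $n$ is large, and this is a sum of $k_0-1$ terms each at most $|A_1|$, so $|A_1|\geq N/\bigl(2(k_0-1)\bigr)$, which is $\gg N\log\log n/\log n$. Without this improvement one cannot exploit Lemma~\ref{lem:factorial}, since with only $|A_1|\geq N/n$ its error term $2(\sigma_k+1)$ swamps the main term $|A_1|/(k-1)!$. Now I would feed this into Lemma~\ref{lem:factorial} at $k=k_0$; using $\sigma_{k_0}\leq\sigma_{k_0-1}\leq Nn^{-0.3}$ and $(k_0-1)(k_0-1)!\leq k_0!$ it gives
\[
|A_{k_0}|\;\geq\;\frac{|A_1|}{(k_0-1)!}-2(\sigma_{k_0}+1)\;\geq\;\frac{N}{2(k_0-1)(k_0-1)!}-2Nn^{-0.3}-2\;\geq\;\frac{N}{2\,k_0!}-3Nn^{-0.3},
\]
the last inequality absorbing the additive constant, which is legitimate since $Nn^{-0.3}>n\geq2$.

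It remains to estimate $k_0!$. By Stirling, $\log(k_0!)=k_0\log k_0+O(k_0)=\tfrac{1}{10}(1+o(1))\log n$ (using $k_0=\lceil\log n/(10\log\log n)\rceil$), so $k_0!=n^{1/10+o(1)}\leq n^{1/5}$ for $n$ large. Hence $\tfrac{N}{2k_0!}\geq\tfrac12 Nn^{-1/5}$, which dominates $3Nn^{-0.3}$ (their ratio is $\tfrac16 n^{1/10}\to\infty$), and so the display yields $|A_{k_0}|\geq\tfrac14 Nn^{-1/5}>N/n^{1.3}$, contradicting our assumption. Therefore $|A_{k_0}|\geq N/n^{1.3}$, as required. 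The one genuinely essential idea is the dichotomy of the second paragraph — either $A_{k_0}$ already beats the threshold, or the tail is tiny and $A_1$ is correspondingly fat — after which everything is Stirling bookkeeping; it is precisely the gap between the exponents $1.3$ and $1$ that produces the slack $n^{-0.3}$ needed to dominate the error term of Lemma~\ref{lem:factorial}, and this same slack is what forces the implied constant in $k\gg\log n/\log\log n$ to be taken small (here $\tfrac{1}{10}$, with room to spare). As usual, the estimates hold only for $n$ past an absolute threshold, which the constant must be shrunk to absorb.
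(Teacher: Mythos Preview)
Your argument is correct and rests on the same two ingredients as the paper's proof: the Stirling estimate $k!=n^{o(1)}$ for $k\asymp\log n/\log\log n$, and Lemma~\ref{lem:factorial} once the tail $\sigma_k$ is known to be $O(Nn^{-0.3})$. The organisation differs, however. The paper argues directly via a three-way split: (i) if $|A_1|\le N/n^{0.1}$, a Markov-type count shows at least $n^{0.1}/2$ classes exceed $N/(2n)$; (ii) if $|A_1|>N/n^{0.1}$ and $\sigma_k\le N/n^{0.3}$, Lemma~\ref{lem:factorial} applies; (iii) if $\sigma_k>N/n^{0.3}$, averaging gives $|A_k|\ge\sigma_k/n$. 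Your contradiction framing collapses these: assuming $|A_{k_0}|<N/n^{1.3}$ already forces the tail small (absorbing case~(iii)), and then the pigeonhole bound $|A_1|\ge N/(2(k_0-1))$ replaces the separate treatment of case~(i), so only the Lemma~\ref{lem:factorial} step survives. This is a mild but genuine streamlining---you never invoke the Markov argument---at the cost of a slightly less explicit dependence of the constant on the exponent $1.3$.
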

\begin{proof}
    Let $k\geq c\frac{\log n}{\log \log n}$, for a small constant $c$ to be chosen so that $k$ is an integer and $k! \leq  n^{0.1}.$\\ Let us first assume that $|A_1|\leq \frac{N}{n^{0.1}}$. By Markov's inequality there are at least $n^{0.1}/2$ classes larger than $N/(2n)$, in particular $|A_k|\geq \frac{N}{n^{1.3}}$. On the other hand suppose $|A_1| > \frac{N}{n^{0.1}}$. If $\sigma_k = \sum_{i>k} |A_i| \leq \frac{N}{n^{0.3}}$, then by Lemma~\ref{lem:factorial} again $|A_k|\geq \frac{N}{n^{1.3}}$. If finally $\sigma_k > \frac{N}{n^{0.3}}$ then  $|A_k|\geq \sigma_k/n > \frac{N}{n^{1.3}}.$\\
\end{proof}
The next lemma allows us to consider one long equation instead of a couple of shorter ones.
\begin{lemma}\label{lem:extension}
    Suppose that a set of integers $A$ contains a solution to the equation
    $$x_1+x_2+\cdots+x_l = y_1+y_2+\cdots+y_r,$$
    where $l,r$ are positive integers. Then for any integers $t\geq 1, w\geq 0,$ $A$ contains a solution to the equation
    $$x_1+x_2+\cdots+x_{lt+w} = y_1+y_2+\cdots y_{rt+w}.$$
\end{lemma}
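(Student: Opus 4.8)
The plan is to take an existing solution of the short equation and manufacture a solution of the long one by combining two elementary operations: scaling and padding. By hypothesis there are elements $a_1,\dots,a_l,b_1,\dots,b_r\in A$ with $a_1+\cdots+a_l=b_1+\cdots+b_r$.

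First I would scale this solution by the factor $t$: writing each $a_i$ exactly $t$ times on the left and each $b_j$ exactly $t$ times on the right gives
\[
\underbrace{a_1+\cdots+a_1}_{t}+\cdots+\underbrace{a_l+\cdots+a_l}_{t}
=\underbrace{b_1+\cdots+b_1}_{t}+\cdots+\underbrace{b_r+\cdots+b_r}_{t},
\]
which is a genuine identity since both sides equal $t(a_1+\cdots+a_l)$; it has $lt$ summands on the left and $rt$ on the right, all lying in $A$. Next I would pad both sides with $w$ extra copies of one fixed element of $A$, say $a_1$. Adding $w\cdot a_1$ to each side preserves equality, and now the left-hand side has $lt+w$ summands (namely $a_1$ repeated $t+w$ times together with $a_2,\dots,a_l$ each repeated $t$ times) while the right-hand side has $rt+w$ summands ($b_1,\dots,b_r$ each repeated $t$ times together with $a_1$ repeated $w$ times). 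Every summand is an element of $A$, so this is precisely a solution of
\[
x_1+x_2+\cdots+x_{lt+w}=y_1+y_2+\cdots+y_{rt+w}
\]
inside $A$, as required.

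There is no real obstacle here: the only things to check are that the number of summands on each side comes out to $lt+w$ and $rt+w$ respectively, and that padding by the same amount on both sides does not disturb the equality, both of which are immediate. The substance of the lemma is just the observation that, within any fixed set $A$, the family of equations $\sum_{i=1}^{L}x_i=\sum_{j=1}^{R}y_j$ whose solution sets contain a given solution is closed under these two moves, so a single solvable instance $(L,R)=(l,r)$ propagates to the entire family $(L,R)=(lt+w,\,rt+w)$.
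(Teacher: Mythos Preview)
Your proof is correct and follows essentially the same approach as the paper: both arguments take a solution of the short equation, repeat each variable $t$ times to obtain a solution with $lt$ and $rt$ summands, and then pad both sides with $w$ copies of a single element of $A$ (the paper uses $x_1'$, you use $a_1$). The paper's write-up is terser, but the construction is identical.
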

\begin{proof}
    Suppose that $x_1',x_2',\cdots, x_l', y_1',y_2',\cdots y_r'\in A$ form a solution to the first equation. Then we can choose
    $$x_i'' = x'_{\lfloor i/t\rfloor} \text{ for } i\leq lt,$$
    $$x_i'' = x'_1 \text{ for } i > lt,$$
    $$y_i'' = y'_{\lfloor i/t\rfloor} \text{ for } i\leq rt,$$
    $$y_i'' = x'_1 \text{ for } i > rt.$$
    That is clearly a solution to the second equation.
\end{proof}
We can use Lemma~\ref{lem:extension} to make a connection between cycles of odd length and a solution to a certain equation.
\begin{lemma}\label{lem:girth}
Suppose that $A\subseteq[N]$ is free from solutions to the equation $$x_1+x_2+\cdots+x_{12}=y_1+y_2+\cdots+y_{9}.$$
Let $G$ be a graph on $N$ vertices identified with $[N]$, where $u,v$ are connected if and only if $|u-v|\in A.$ Then $G$ does not contain any cycles of length $3$ or $5$.
\begin{proof}
We will argue that if $G$ contains a cycle of length $3$ or $5$, then there is a solution to the equation. Suppose that vertices $v_1,v_2,\cdots, v_5\in [N]$ presented in the increasing order form a cycle of length $5$. If $G$ contains a cycle of length $3$, we treat it as a cycle of length $5$, by going back and forth with the last edge.\\
Fix some orientation of this cycle and let $w_i$ be the successor of $v_i$ in this orientation. Of course $w_1,w_2,\cdots, w_5$ is a reordering of $v_1,v_2,\cdots, v_5$. Let $r_i$ be $1$ if $w_i>v_i$ and $-1$ otherwise.\\

\begin{figure}[H]
 \begin{tikzpicture}[
            > = stealth, 
            shorten > = 1pt, 
            auto,
            node distance = 1.5cm, 
            semithick 
        ]

        \tikzstyle{every state}=[
            draw = black,
            thick,
            fill = white,
            minimum size = 4mm
        ]

        \node[state] (v1) {$v_1$};
        \node[state] (v2) [right of=v1] {$v_2$};
        \node[state] (v3) [right of=v2] {$v_3$};
        \node[state] (v4) [right of=v3] {$v_4$};
        \node[state] (v5) [right of=v4] {$v_5$};

        \draw [->] (v1) to [out=60,in=120] (v3);
        \draw [->] (v3) to [out=60,in=120] (v4);
        \draw [->] (v4) to [out=60,in=120] (v5);
        \draw [->] (v5) to [out=-140,in=-40] (v2);
        \draw [->] (v2) to [out=-120,in=-60] (v1);
    \end{tikzpicture}
    \caption{Here is an example of a cycle on 5 vertices. Edges connect vertices $v_i$ and $w_i$ in the orientation of the cycle. Arrows pointing right correspond to $r_i=1$ and arrows pointing left to $r_i=-1$.}
\end{figure}
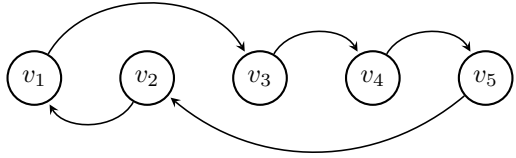
    
We have
$$\sum_{i=1}^5 |w_i-v_i| r_i = \sum_{i=1}^5 w_i-v_i  = \sum_{i=1}^5 v_i -\sum_{i=1}^5 v_i = 0.$$
Because $|w_i-v_i|\in A$ we have for some $a_1,a_2,\cdots, a_5\in A$ that
$$r_1a_1+r_2a_2+\cdots r_5a_5 = 0.$$
The number of $r_i$ equal to $1$ is at least $1$ and at most $4$, which means that one of these two situation holds
$$a_1' + a_2' +a_3' + a_4' = a_5',$$
$$a_1' + a_2' +a_3' = a_4'+a_5',$$
where $a_1', a_2',\cdots, a_5'$ is some reordering of $a_1,a_2,\cdots, a_5.$ By Lemma~\ref{lem:extension} any of these three equalities constitutes a solution to the equation $$x_1+x_2+\cdots+x_{12} = y_1+y_2+\cdots y_{9}$$.
\end{proof}
\end{lemma}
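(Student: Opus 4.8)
The plan is to prove the contrapositive in a local form: I will show that any $3$-cycle or $5$-cycle in $G$ forces $A$ to contain a solution to a short equation of the shape $x_1+\cdots+x_l=y_1+\cdots+y_r$, and then inflate that short equation into the target equation $x_1+\cdots+x_{12}=y_1+\cdots+y_9$ using Lemma~\ref{lem:extension}. The arithmetic heart of the argument is that inflating from $(l,r)$ to $(12,9)$ requires nonnegative integers $t\ge 1$ and $w\ge 0$ with $lt+w=12$ and $rt+w=9$; subtracting these gives $(l-r)t=3$, so the construction succeeds precisely when $l-r$ divides $3$ and the resulting $w=12-lt$ is nonnegative. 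The whole point of the exponents $12$ and $9$ is that every short equation produced by a $3$- or $5$-cycle has $l-r\in\{1,3\}$ and stays within these size bounds.

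The $3$-cycle case is immediate. If $v_1<v_2<v_3$ are the vertices of a triangle, then all three differences $v_2-v_1$, $v_3-v_2$, $v_3-v_1$ lie in $A$, and $(v_2-v_1)+(v_3-v_2)=(v_3-v_1)$ is a solution to $x_1+x_2=y_1$, i.e.\ the case $(l,r)=(2,1)$. Here $l-r=1$, so I take $t=3$, $w=6$ and apply Lemma~\ref{lem:extension} to land in $(12,9)$, contradicting the hypothesis on $A$.

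For the $5$-cycle case I encode the cycle as a signed sum. I label the vertices $v_1,\dots,v_5$, orient the cycle arbitrarily, let $w_i$ be the successor of $v_i$ so that $w_1,\dots,w_5$ is a permutation of $v_1,\dots,v_5$, set $a_i=|w_i-v_i|\in A$, and let $r_i$ be $+1$ if $w_i>v_i$ and $-1$ otherwise. Summing around the cycle telescopes:
$$\sum_{i=1}^5 r_i a_i=\sum_{i=1}^5 (w_i-v_i)=0.$$
Since the vertices are distinct we have $a_i\ge 1$, so the signs cannot all agree, and the number of $+1$'s lies in $\{1,2,3,4\}$. Moving the negative terms to the other side and labelling the larger side as the $x$'s, I obtain either $x_1+x_2+x_3+x_4=y_1$ (the split into $4$ and $1$, so $(l,r)=(4,1)$) or $x_1+x_2+x_3=y_1+y_2$ (the split into $3$ and $2$, so $(l,r)=(3,2)$). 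The first has $l-r=3$, handled by $t=1$, $w=8$; the second has $l-r=1$, handled by $t=3$, $w=3$. In either case Lemma~\ref{lem:extension} produces a solution to the length $12$ versus $9$ equation inside $A$, the desired contradiction.

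The only genuine obstacle is bookkeeping rather than ideas. I must check that the sign count for the $5$-cycle is strictly between $0$ and $5$, which is exactly the telescoping-to-zero observation together with $a_i\neq 0$, and then verify that each of the three short equations embeds into $(12,9)$ with legitimate parameters $t\ge 1$ and $w\ge 0$. I would carry out these three Diophantine checks explicitly, since they are precisely what pin down the exponents $12$ and $9$; beyond them there is no further difficulty, and I note that the $3$-cycle need not be routed through any $5$-walk device, being handled directly.
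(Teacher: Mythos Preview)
Your proof is correct and follows essentially the same route as the paper's: telescope the signed edge-differences around the cycle to zero, read off a short equation from the sign split, and then inflate via Lemma~\ref{lem:extension}. The only cosmetic differences are that you handle the $3$-cycle directly as the $(2,1)$ equation while the paper routes it through a degenerate $5$-walk, and you spell out the $(t,w)$ parameters for the extension explicitly where the paper leaves them implicit.
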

We are now ready to prove the next theorem.
\begin{proof}(of Theorem~\ref{thm:long})
Let $A_1,A_2,\cdots, A_n$ be the sets containing all numbers in each colour. Assume without loss of generality that $|A_1|\geq\cdots\geq |A_n|$. To make a further calculation easier we perform a technical clean up. 

Let us add three auxiliary colours $A'_1,A'_2, A'_3$ and use them to colour all numbers from the interval $[N+1, 2N]$. $A'_1$ will cover $[N+1, \lfloor 2^{1/3}N\rfloor]$, $A'_2$ will cover $[\lfloor 2^{1/3}N\rfloor +1, \lfloor 2^{2/3}N\rfloor]$ and $A'_3$ will cover the rest. This way we did not introduce any new solutions to the equation as $9\cdot 2^{1/3} < 12$. We now pretty much forget about the three additional colours, which is inconsequential because we will show that $N=O(\sqrt{(n+3-k)!})$, where $k\geq c\frac{\log n}{\log\log n}$. In return, each of the colours $A_1,A_2,\cdots, A_k$ introduces at least $N$ vertices of degree at least $|A_k|$ to the graph, we will use this observation later in the proof.

By Lemma~\ref{lem:large_colour}, there exists integer $k\geq c \frac{\log n}{\log\log n}$, where $c$ is a small constant, such that $|A_k|\geq \frac{N}{n^{1.3}}.$\\
For each $i\leq k$ we now construct graph $G_i$ on $2N$ vertices $v_1,v_2\cdots v_{2N}$ and edges between $v_{x}$ and $v_{y}$ if $|x-y|\in A_i$.\\
By Lemma~\ref{lem:girth}, graphs $G_1,G_2,\cdots, G_k$ are free from cycles of length $3$ and $5$. Moreover, the sets $A_1,A_2,\cdots, A_k$ contain no solutions to the equation
$$x_1+x_2+x_3+x_4=y_1+y_2+y_3.$$
Using these two observations we will find large independent sets in the graphs $G_1,G_2,\cdots, G_k.$ By a very similar reasoning as in the proof of Theorem~\ref{thm:schur_like} we argue that for each of these graphs the independence number is at least $d_3(v)/4$, where $v$ is an arbitrary vertex. For a fixed vertex $v_a$ we do so by considering four sets:
$$\{v_{a+ x+ y+ z} : x,y,z\in A_i, a+ x+ y+ z\in [2N]\},$$
$$\{v_{a+ x+ y- z} : x,y,z\in A_i, a+ x+ y- z\in [2N]\},$$
$$\{v_{a+ x- y- z} : x,y,z\in A_i, a+ x- y- z\in [2N]\},$$
$$\{v_{a- x- y- z} : x,y,z\in A_i, a- x- y- z\in [2N]\}.$$

Each of them will form an independent set because $(3A_i-3A_i)\cap A_i =\emptyset$ and the largest one will have size at least $d_3(v)/4$. Similarly, because we know that the graph is free of cycles of length 3 and we know that $\alpha(G_i)\geq d_1(v)$ and $\alpha(G_i)\geq d_2(v)$.
Knowing this, we apply Lemma~\ref{lem:shearer2} to $G_i$ with $m=3$ to find independent sets $S_1,S_2,\cdots S_k$, of sizes $\alpha(G_1), \alpha(G_2),\cdots, \alpha(G_k)$, where  
$$\alpha(G_i)\geq \sum_{v\in G_i}\Bigl[\frac{d_1(v)/2}{1+\alpha(G)+\alpha(G)+\alpha(G)/4}\Bigr]^{1/2}.$$
Thus we get
$$\alpha(G_i)^{3/2}\geq \frac{1}{2}\sum_{v\in G_i}d_1(v)^{1/2}\geq \frac{N}{2}\frac{\sqrt{N}}{n^{0.65}}.$$
And so $\alpha(G_i)\gg \frac{N}{n^{0.45}}$. Since each $S_i$ can be looked at as a subset of $[2N]$, we can write
$$\sum_{t_1,t_2,\cdots, t_k\in [-2N, 2N]} |(t_1+S_1)\cap (t_2+S_2)\cap \cdots \cap (t_k+S_k)\cap [2N]|\geq |S_1||S_2|\cdots |S_n| 2N,$$
and so for some choice of the shifts $t_1',t_2',\cdots, t_k'$ the intersection is at least as big as the average, so that
$$|(t_1'+S_1)\cap (t_2'+S_2)\cap \cdots \cap (t_k'+S_k)\cap [2N]|\geq \frac{1}{2^k n^{0.45k}}2N\gg \frac{1}{n^{0.46k}}N.$$
Call the above intersection $S$, let $s_1, s_2,\cdots, s_q$ be all its elements in the increasing order.\\It is easy to argue that the set $X:=\{s_2-s_1,s_3-s_1,\cdots, s_q-s_1\}$ is free from colours $A_1,A_2,\cdots, A_k$. If not, then $s_j-s_1\in A_i$ for some $i\leq k$, so there is an edge between $s_j-t_i'$ and $s_1-t_i'$ in the graph $G_i$. But $S\subseteq t_i'+S_i$, so $s_j-t_i'$ and $s_1-t_i'$ are elements of $S_i$. We have a contradiction as we found an edge between two vertices of an independent set.\\
Let us continue the induction on this set for $n-k$ steps as in the proof of Theorem~\ref{thm:schur_like} to construct sets $X\supseteq X_{1}\supseteq\cdots\supseteq X_{n-k}$ with $$|X_{i+1}|\geq \frac{|X_i|}{3\sqrt{n-k+1-i}}.$$ 
The last set $X_{n-k}$ must have size at most 1 so $|X|=O( \sqrt{(n-k)!})$. That means $N=O(n^{0.46k}|X|)=O(\sqrt{(n-k/100)!})$.
\end{proof}
If we could assume that $|A_1|,|A_2|,\cdots, |A_n|\gg \frac{N}{n}$ in the above argument, we could apply Lemma~\ref{lem:shearer2} on all colours at once and obtain the bound $N=O((n!)^{1/3})$, and choosing a suitably long equation for a given $m$ we would have $N=O((n!)^{1/m}).$
\section{Other non-invariant equations}
Theorem~\ref{thm:schur_like} and Lemma~\ref{lem:extension} immediately imply that the same upper bound $O(\sqrt{n!})$ holds for all equations
$$x_1+x_2+\cdots+x_{3t+w} = y_1+y_2+\cdots y_{2t+w},$$
where $t\geq 1, w\geq 0$ are integers. This leaves out quite a number of equations, even if we only consider those with coefficients $\pm1$, the first interesting case being $x_1+x_2+x_3=y$. It is worth mentioning that in a graph constructed from a colouring of $[N]$, already introduced in the proofs of the two previous theorems, every cycle of length $5$, corresponds either to such equation or the equation from Theorem~\ref{thm:schur_like}. 
It is however not clear how sets free of solutions to such equation should behave, as proofs of Schur's Theorem do not generalize in that case.
We can however give a similar bound based on the observation that in any $n$-colouring of $[\lfloor(2n)!\rfloor]$ there is an abundance of monochromatic Schur triples.
\begin{theorem}\label{thm:three}
Let $l\geq 3, N$ be positive integers and suppose that each of the numbers $1,2,\cdots, N$ has one of $n$ colours assigned to it. If that there are no monochromatic solutions to the equation $x_1+x_2+x_3=y$. Then we have $N\leq e(2n)!$.
\end{theorem}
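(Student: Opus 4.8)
The plan is to derive the bound from Schur's Theorem (Theorem~\ref{thm:schur}) by passing to an auxiliary colouring that uses at most $2n$ colours and contains no monochromatic Schur triple. Write $\chi$ for the given $n$-colouring of $[N]$, with colour classes $A_1,\dots,A_n$, and assume towards a contradiction that no $A_i$ contains a solution of $x_1+x_2+x_3=y$, i.e. that $(A_i+A_i+A_i)\cap A_i=\emptyset$ for every $i$. I would then define $\ell\colon[N]\to\{0,1\}$ by letting $\ell(m)=1$ precisely when $m$ can be written as $a+b$ with $a,b\in A_{\chi(m)}$ (equivalently, when $m\in A_{\chi(m)}+A_{\chi(m)}$), and $\ell(m)=0$ otherwise, and set $\Psi(m):=(\chi(m),\ell(m))$. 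This is a colouring of $[N]$ using at most $2n$ colours.

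The heart of the argument is the claim that $\Psi$ admits no monochromatic solution of $u+v=w$. Indeed, suppose $u+v=w$ with $\Psi(u)=\Psi(v)=\Psi(w)=(i,s)$. Then $u,v,w$ all lie in $A_i$, and the relation $w=u+v$ exhibits $w$ as a sum of two elements of $A_i$, so $\ell(w)=1$; hence $s=1$, and in particular $\ell(u)=1$, so we may fix $a,b\in A_i$ with $a+b=u$. But then $a+b+v=u+v=w$, with $a,b,v\in A_i$ and $w\in A_i$ — a monochromatic solution of $x_1+x_2+x_3=y$, contradicting our assumption. Thus $\Psi$ uses at most $2n$ colours and has no monochromatic Schur triple, so Theorem~\ref{thm:schur} applies and gives $N\le\lfloor e(2n)!\rfloor\le e(2n)!$.

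The main difficulty is simply spotting the auxiliary colouring; the verification above is then routine. The point that is not obvious a priori is that a single extra bit per colour class — recording whether an element lies in the sumset of its own class — already suffices, and that the reasoning is not circular: a $\Psi$-monochromatic Schur triple $u+v=w$ is forced to carry that bit by the relation $w=u+v$ itself, and it is exactly this that then produces the third summand through the splitting $u=a+b$. The same construction generalises to the equation $x_1+\cdots+x_l=y$ of Theorem~\ref{thm:imbalanced}: one instead lets $\ell(m)$ be the largest $k$ with $m=a+b$ for some $a,b\in A_{\chi(m)}$ satisfying $\min(\ell(a),\ell(b))\ge k-1$. For this $\Psi$ a monochromatic Schur triple $u+v=w$ would again force $\ell(w)\ge\ell(w)+1$ (witnessed by $u,v$), so none exists; and since an easy induction gives that $\ell(m)\ge k$ makes $m$ a sum of $j$ elements of $A_{\chi(m)}$ for every $1\le j\le 2^k$, the no-solution hypothesis bounds $\ell$ by $\lceil\log_2 l\rceil-1$, so $\Psi$ uses at most $n\lceil\log_2 l\rceil$ colours and Schur's Theorem closes the argument — which recovers the present statement in the case $l=3$.
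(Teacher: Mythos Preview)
Your proof is correct and follows essentially the same idea as the paper's: split each colour class $A_i$ according to whether an element lies in $A_i+A_i$, and observe that a monochromatic Schur triple in the resulting $2n$-colouring forces a solution of $x_1+x_2+x_3=y$ in the original. The paper phrases this as an iterative recolouring procedure (moving $c$ to an auxiliary colour $A_i'$ whenever $a+b=c$ with $a,b,c\in A_i$) rather than via your static labelling $\ell$, but the content is identical, and your sketch of the extension to Theorem~\ref{thm:imbalanced} likewise matches the paper's layered auxiliary colours $A_i^1,\dots,A_i^r$.
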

\begin{proof}
Suppose for contradiction $N > e(2n)!$ and a colouring $A_1,A_2,\cdots, A_n$ is given, which avoids monochromatic solutions to our equation. We also create $n$ additional colours $A_1',A_2',\cdots, A_n'$, which are initially empty. From Schur's theorem, we know that whenever we colour $[N]$ with $2n$ colours we will find a monochromatic sum. Suppose that $a+b=c$ and $a,b,c\in A_i$. Let us change the colour of $c$ to $A_i'$ and repeat the process until we find a monochromatic sum in $A_i'$ for some $i$. This must happen after at most $N$ steps. Suppose that $a'+b'=c'$ and $a',b',c'\in A_i'$. That means in the original colouring we had $x_1+x_2=a'$ with $x_1,x_2,a'\in A_j$ and so there was a monochromatic solution $x_1+x_2+b'=c'$ inside the colour $A_i.$ 
\end{proof}
\begin{figure}[H]
 \begin{tikzpicture}[
            > = stealth, 
            shorten > = 1pt, 
            auto,
            node distance = 1.5cm, 
            semithick 
        ]

        \tikzstyle{every state}=[
            draw = black,
            thick,
            fill = white,
            minimum size = 4mm
        ]

        \node[state] (v1) {$c'$};
        \node[state] (v2) [below left of=v1] {$a'$};
        \node[state] (v3) [below right of=v1] {$b'$};
        \node[state] (v4) [below left of=v2] {$x_1$};
        \node[state] (v5) [below right of=v2] {$x_2$};

        \draw [->] (v2) to (v1);
        \draw [->] (v3) to (v1);
        \draw [->] (v4) to (v2);
        \draw [->] (v5) to (v2);
    \end{tikzpicture}
    
    \caption{It is helpful to think about the variables from the proof as forming a tree, with $c'$ in the root, the second layer are elements of colours $A_1',A_2',\cdots, A_n'$ and the third layer elements of colours $A_1',A_2',\cdots, A_n'$.}
\end{figure}
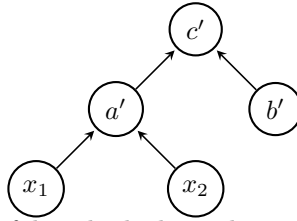
Proof of Theorem~\ref{thm:imbalanced} follows pretty much in the same way.
\begin{proof}(of Theorem~\ref{thm:imbalanced})\\
Let $r=\lceil\log_2 l\rceil$ and for contradiction assume that $N>e(rn)!$ and a colouring $A_1,A_2,\cdots, A_n$ is given, which avoids monochromatic solutions to our equation. For every colour $A_i$ we construct $r-1$ initially empty auxiliary colours $A_i^2, A_i^3,\cdots, A_i^{r}$ and set $A_i^1=A_i$. Since there are $rn$ colours in total there will always be a monochromatic sum. Suppose that at some point of our procedure we have $a,b,a+b\in A_i^j$. Then we move $a+b$ from $A_i^j$ to $A_i^{j+1}$. Ultimately, there will be $a,b,a+b\in A_i^{r}$ for some~$i$. We argue by induction on $r$ that any number $c\in A_i^{r}$ can be expressed as a sum of $k\leq 2^{r-1}$ numbers from $A_i^1\cup A_i^2\cup \cdots\cup A_i^{r}$ and thus $a+b\in A_i^{r}$ can be expressed as a sum of $l\leq 2^r$ numbers of the same initial colour $A_i$. If $r=1$ the assertion is trivial as $c=c$. If $r>1$ let $k'=\lfloor k/2\rfloor$ and $k''=\lceil k/2 \rceil$. Clearly $k'\leq k''\leq 2^{r-1}$ and since $c\in A_i^{r}$ there exist numbers $c',c''\in A_i^{r-1}$ such that $c'+c''=c$. By induction hypothesis they are expressed as a sum of $k'$ and $k''$ numbers from $A_i^1\cup A_i^2\cup \cdots\cup A_i^{r-1}$ respectively. Thus $c$ is expressed as a sum of $k'+k''=k$ numbers from $A_i^1\cup A_i^2\cup \cdots\cup A_i^{r}$. We have a contradiction as that means the sum of $l$ numbers from $A_i$ also belongs to $A_i$. 
\end{proof}
Finally, we can implement a similar idea to give a bound on the equation with coefficients different than $-1,0,1$, namely $$x_1+2x_2+x_3=y_1+y_2.$$
\begin{proof}(Of Theorem~\ref{thm:coefficients})
We proceed in the same way as in the proof of Theorem~\ref{thm:three} by creating $n$ additional colours $A_1',A_2',\cdots, A_n'$. This time whenever $a,b,c\in A_i$ and $a+b=c$ we change the colour of $a$ to $A_i'$. After at most $N$ steps we find $a'+b'=c'$ with $a',b',c'\in A_j'$. That means there exist $x_1,y_1\in A_j$ such that $x_1+a'=y_1$. Our monochromatic solution is then $$x_1+2a'+b'=y_1+c'.$$
\end{proof}
\begin{figure}[H]
 \begin{tikzpicture}[
            > = stealth, 
            shorten > = 1pt, 
            auto,
            node distance = 1.5cm, 
            semithick 
        ]

        \tikzstyle{every state}=[
            draw = black,
            thick,
            fill = white,
            minimum size = 4mm
        ]

        \node[state] (v1) {$c'$};
        \node[state] (v2) [below left of=v1] {$a'$};
        \node[state] (v3) [below right of=v1] {$b'$};
        \node[state] (v4) [above left of=v2] {$y_1$};
        \node[state] (v5) [below left of=v4] {$x_1$};

        \draw [->] (v2) to (v1);
        \draw [->] (v3) to (v1);
        \draw [->] (v2) to (v4);
        \draw [->] (v5) to (v4);
    \end{tikzpicture}
\end{figure}

\textsc{Faculty of Mathematics and Computer Science, Adam Mickiewicz University, Umultowska 87, 61-614 Poznan, Poland }

\textit{Email address:} tomasz.kosciuszko@amu.edu.pl
\end{document}